\setlist{labelindent=1pt,itemsep=.5em}
\setlist[itemize]{leftmargin=1.2cm}
\setlist[enumerate]{itemindent=0em,leftmargin=1.2cm}
\setlist[enumerate,1]{label={\upshape(\roman*)}}
\newcommand{\subjclass}[2][2020]{%
  \let\@oldtitle\@title%
  \gdef\@title{\@oldtitle\footnotetext{#1 \emph{Mathematics subject classification}: #2}}%
}
\newcommand{\keywords}[1]{%
  \let\@@oldtitle\@title%
  \gdef\@title{\@@oldtitle\footnotetext{\emph{Keywords}: #1}}%
}
\newcommand*\sq{\mathbin{\vcenter{\hbox{\rule{.3ex}{.3ex}}}}}
\newtheorem{theorem}{Theorem}[section]
\newtheorem{lemma}[theorem]{Lemma}
\theoremstyle{definition}
\newtheorem{definition}[theorem]{Definition}
\newtheorem{example}[theorem]{Example}
\theoremstyle{remark}
\title{HNN-extension of involutive multiplicative Hom-Lie algebras}
\author{Sergei Silvestrov$^{1}$, Chia Zargeh$^{2}$ \\
\small{
$^{1}$ Division of Mathematics and Physics,
School of Education, Culture and Communication,
M\"{a}lardalen University, Box 883, 72123 V\"{a}steras, Sweden. \authorcr
e-mail: sergei.silvestrov@mdh.se} \authorcr
$^{2}$Instituto de Matem\'atica e Estat\'istica, Universidade de S\~ao Paulo, S\~ao Paulo, Brazil. \authorcr
e-mail: chia.zargeh@ime.usp.br}
\subjclass[2020]{17D30, 17B61}
\keywords{HNN-extension, Hom-Lie algebra, Hom-associative algebra}
\date{}
\begin{document}

\maketitle

\abstract{The construction of HNN-extensions of involutive Hom-associative algebras and involutive Hom-Lie algebras is described. Then, as an application of HNN-extension, by using the validity of Poincar\'e-Birkhoff-Witt theorem for involutive Hom-Lie algebras, we provide an embedding theorem.}

\section{Introduction}

One of the most important constructions in combinatorial group theory is Higman-Neumann-Neumann extension (or HNN-extension, for short), which states that if $A_1$ and $A_2$ are isomorphic subgroups of a group $G$, then it is possible to find a group $H$ containing $G$ such that $A_1$ and $A_2$ are conjugate to each other in $H$ and $G$ is embeddable in $H$ (see\cite{HigmanNeumannBHNeumannH49:Embedtheogroups}). The HNN-extension of a group has a topological interpretation described in \cite{BokutKukun94:algortmcombalg} and  \cite{LyndonSchup:combinGrTheory1977}, which is used as a motivation for its study. Spreading classical techniques in combinatorial group theory to other algebraic structures has shown outstanding capacities for solving problems in affine algebraic geometry, the theory of Lie algebras and mathematical physics. In this regard, HNN-extension of Lie algebras was constructed by Lichtman and Shirvani \cite{LichtmanShirvani1997:HNNextLiealg} and Wasserman \cite{Wasserman98:derivHNNconstrLiealg} through different approaches. They used HNN-extension in order to give a new proof for Shirshov’s theorem \cite{Shirshov58:OnfreeLierings}, namely, a Lie algebra of finite or countable dimension can be embedded into a $2$-generator Lie algebra. Moreover, the idea of HNN-extension has been recently spread to Leibniz algebras in \cite{LandraShahryariZardeh2019:HNNextLeibnalg} and Lie superalgebras in \cite{LandraPaezGuillanZargeh2020:HNNextLeibnizalg}, which are respectively, non-antisymmetric and natural generalization of Lie algebras.

In this paper we intend to introduce HNN-extension for the Hom-generali\-zation of Lie algebras.
Hom-Lie algebras and more general quasi-Hom-Lie algebras were introduced first by Hartwig, Larsson and Silvestrov in \cite{HartwigLarssonSilvestrov:defLiealgsderiv}, where the general quasi-deformations and discretizations of Lie algebras of vector fields using more general $\sigma$-derivations (twisted derivations) and a general method for construction of deformations of Witt and Virasoro type algebras based on twisted derivations have been developed, initially motivated by the $q$-deformed Jacobi identities observed for the $q$-deformed algebras in physics,  $q$-deformed versions of homological algebra and discrete modifications of differential calculi. Hom-Lie superalgebras, Hom-Lie color algebras and more general quasi-Lie algebras and color quasi-Lie algebras where introduced first in \cite{LarssonSilv2005:QuasiLieAlg,LarssonSilv:GradedquasiLiealg,SigSilv:CzechJP2006:GradedquasiLiealgWitt}. Quasi-Lie algebras and color quasi-Lie algebras encompass within the same algebraic framework the quasi-deformations and discretizations of Lie algebras of vector fields by $\sigma$-derivations obeying twisted Leibniz rule, and color Lie algebras, the well-known natural generalizations of Lie algebras and Lie superalgebras. In quasi-Lie algebras, the skew-symmetry and the Jacobi identity are twisted by deforming twisting linear maps, with the Jacobi identity in quasi-Lie and quasi-Hom-Lie algebras in general containing six twisted triple bracket terms. In Hom-Lie algebras, the bilinear product satisfies the non-twisted skew-symmetry property as in Lie algebras, and the Hom-Lie algebras Jacobi identity has three terms twisted by a single linear map, reducing to the Lie algebras Jacobi identity when the twisting linear map is the identity map. Hom-Lie admissible algebras have been considered first in \cite{ms:homstructure}, where in particular the Hom-associative algebras have been introduced and shown to be Hom-Lie admissible, leading to Hom-Lie algebras using commutator map as new product, and thus  constituting a natural generalization of associative algebras as Lie admissible algebras. Since the pioneering works \cite{HartwigLarssonSilvestrov:defLiealgsderiv,LarssonSilvJA2005:QuasiHomLieCentExt2cocyid,LarssonSilv:GradedquasiLiealg,LarssonSilv2005:QuasiLieAlg,LarssonSilv:QuasidefSl2,ms:homstructure}, Hom-algebra structures expanded into a popular area with increasing number of publications in various directions. Hom-algebra structures of a given type include their classical counterparts and open broad possibilities for deformations, Hom-algebra extensions of cohomological structures and representations, formal deformations of Hom-associative algebras and Hom-Lie algebras, Hom-Lie admissible Hom-coalgebras, Hom-coalgebras, Hom-Hopf algebras,
Hom-Lie algebras, Hom-Lie superalgebras, color Hom-Lie algebras, BiHom-Lie algebras, BiHom-associative algebras, BiHom-Frobenius algebras and $n$-ary generalizations of Hom-algebra structures have been further investigated in various aspects for example in \cite{AbdaouiAmmarMakhloufCohhomLiecolalg2015,Abdelkader2017:generalizedderivBiHomalgebras,AbramovSilvestrov:3homLiealgsigmaderivINvol,AmmarEjbehiMakhlouf:homdeformation,
AmmarMabroukMakhloufCohomnaryHNLalg2011,AmmarMakhloufHomLieSupAlg2010,AmmarMakhloufSaadaoui2013:CohlgHomLiesupqdefWittSup,
AmmarMakhloufSilv:TernaryqVirasoroHomNambuLie,ArmakanFarhangdoost:IJGMMP,ArmakanSilv:envelalgcertaintypescolorHomLie,ArmakanSilvFarh:envelopalgcolhomLiealg,
ArmakanSilvFarh:exthomLiecoloralg,ArmakanSilv:NondegKillingformsHomLiesuperalg,akms:ternary,ams:ternary,ArnlindMakhloufSilvnaryHomLieNambuJMP2011,
Bakayoko2014:ModulescolorHomPoisson,Bakayoko:LaplacehomLiequasibialg,Bakayoko:LmodcomodhomLiequasibialg,BakayokoDialo2015:genHomalgebrastr,BakyokoSilvestrov:Homleftsymmetriccolordialgebras,
BakyokoSilvestrov:MultiplicnHomLiecoloralg,BakayokoToure2019:genHomalgebrastr,BenAbdeljElhamdKaygorMakhl201920GenDernBiHomLiealg,
BenHassineMabroukNcib:ConstrMultiplicnaryhomNambualg,BenHassineChtiouiMabroukNcib:Strcohom3LieRinehartsuperalg,BenMakh:Hombiliform,
CanepeelGoyaverts:MonoidalHomHopfalgebras,CaoChen2012:SplitregularhomLiecoloralg,ChengQi2016:RepresentBiHomLiealg,ElchingerLundMakhSilv:BracktausigmaderivWittVir,
GrazMakhlMeniniPanaite:bihom,GuanChenSun:HomLieSuperalgebras,HeMaSiUnAlHomAss,HounkonnouHoundedjiSilvestrov:DoubleconstrbiHomFrobalg,
KitouniMakhloufSilvestrov,kms:solvnilpnhomlie2020,kms:narygenBiHomLieBiHomassalgebras2020,
LarssonSigSilvJGLTA2008:QuasiLiedefFttN,LarssonSilvestrovGLTMPBSpr2009:GenNComplTwistDer,
MabroukNcibSilvestrov2020:GenDerRotaBaxterOpsnaryHomNambuSuperalgs,ms:homstructure,MakhSilv:HomDeform,MakhSil:HomHopf,MakhSilv:HomAlgHomCoalg,
Makhlouf2010:ParadigmnonassHomalgHomsuper,MandalMishra:HomGerstenhaberHomLiealgebroids,MishraSilvestrov:SpringerAAS2020HomGerstenhalgsHomLiealgds,
RichardSilvJA2008:quasiLiesigderCtpm1,RichardSilvestrovGLTMPBSpr2009:QuasiLieHomLiesigmaderiv,Saadaoui:ClassmultiplsimpleBiHomLiealg,
SigSilv:CzechJP2006:GradedquasiLiealgWitt,Sheng:homrep,ShengBai2014:homLiebialg,ShengChen2013:HomLie2algebras,ShengXiong:LMLA2015:OnHomLiealg,SigSilv:GLTbdSpringer2009,SilvestrovParadigmQLieQhomLie2007,
Yau2009:HomYangBaxterHomLiequasitring,Yau:EnvLieAlg,Yau:HomolHom,Yau:HomBial,Yuan2012:HomLiecoloralgstr,
ZhouNiuChen:GhomDerivation,ZhouChenMa:GenDerHomLiesuper,ZhouZhaoZhang:GenDerHomLeibnizalg}.

Our approach for construction of the HNN-extension of Hom-gene\-ralization of Lie algebras is based on the corresponding construction for its envelope. Therefore, we concentrate on the study of HNN-extensions for involutive Hom-Lie algebras in which their universal enveloping algebras have been explicitly obtained in \cite{GuoZhZheUEPBWHLieA}.
It is worth noting that there exists another approach provided in \cite{Yau:EnvLieAlg} for obtaining the universal enveloping algebra of a Hom-Lie algebra as a suitable quotient of the free  Hom-nonassociative algebra through weighted trees, but the point of difficulty in the approach in \cite{Yau:EnvLieAlg} is the size of the weighted trees. Involutive Hom-Lie algebras have been constructed in \cite{ZhengGuo16:FreeinvolHomsemigrHomassalg}, and the classical theory of enveloping algebras of Lie algebras was extended to an explicit construction of the free involutive Hom-associative algebra on a Hom-module in order to obtain the universal enveloping algebra \cite{GuoZhZheUEPBWHLieA}. This construction leads to a Poincare-Birkhoff-Witt theorem for the enveloping associative algebra of an involutive Hom-Lie algebra. This approach has been extended to the enveloping algebras for color Hom-Lie algebras in \cite{ArmakanSilvFarh:envelopalgcolhomLiealg,ArmakanSilv:envelalgcertaintypescolorHomLie}. Extensions of Hom-Lie superalgebras and Hom-Lie color algebras have been considered in \cite{ArmakanFarhangdoost:IJGMMP,ArmakanSilvFarh:exthomLiecoloralg}. Hom-associative Ore extensions have been considered in \cite{Back2018:HomassociativeOreextensions,Back2018:HomassOreextsweakunit,Back2018:HilbertbasisthmnonassHomassOreexts,Back2019:formaldefsquantplanesunivenvalgs,
Back2020:MultiparformaldefsternhomNambuLiealgs,Back2020:homassociativeWeylalgebras}

The paper is organized as follows. In Section \ref{sec:InvolHomLie}, we recall the preliminary concepts related to involutive Hom-associative algebras and involutive Hom-Lie algebras. In Section \ref{sec:HNNextensioninvolHomassocalgs}, we introduce the HNN-extension for involutive Hom-associative algebras. In Section \ref{sec:HNNextensioninvolHomLiealgs}, we construct the HNN-extension for involutive Hom-Lie algebras and provide an embedding theorem.

\section{Involutive Hom-algebras} \label{sec:InvolHomLie}
In this section we recall necessary concepts related to involutive Hom-associative and involutive Hom-Lie algebras.
\begin{definition} \label{def-involutive}
Let $K$ be a field.
\begin{enumerate}[label=\upshape{(\alph*)},left=0pt]
    \item Hom-module is a pair $(V,\alpha_V)$ consisting of a $K$-module $V$ and a linear operator $\alpha_V:V \to V$.
   \item Hom-associative algebra is a triple $(A, \ast_A ,\alpha_A)$ consisting of a $K$-module $A$, a linear map~$ \ast_A :A \otimes A \to A$, called the multiplication, and a linear operator $\alpha_A :A\to A$ satisfying the Hom-associativity
   \[  \alpha_A(x)\ast_A (y\ast_A z)=(x\ast_A y)\ast_A \alpha_A (z), \]
   for all $x,y,z \in A.$
   \item Hom-associative algebra is said to be {\em multiplicative} if the linear map $\alpha$ is multiplicative in the sense of satisfying $\alpha_A(x\ast_A y)= \alpha_A(x) \ast_A \alpha_A (y)$ for all $x, y \in A$.
   \item Hom-associative algebra $(A, \ast_A ,\alpha_A)$ (resp.  Hom-module $(V,\alpha_V)$) is said to be {\em involutive} if $\alpha^{2}_A=id$ (resp. $\alpha^{2}_V=id)$.
   \item Let  $(V,\alpha_V)$ and  $(W,\alpha_W)$  be  Hom-modules.   A $K$-linear  map $f:V \to W$ is  called  a morphism of Hom-modules if $f(\alpha_V (x))=\alpha_W(f(x)) $ for all $x \in V.$
   \item Let $(A, \ast_A ,\alpha_A)$ and $(B, \ast_B ,\alpha_B)$ be two Hom-associative algebras.  A $K$-linear map $f:A \to B$ is a morphism of Hom-associative algebras if
   \[ f(x \ast_A y)=f(x) \ast_B f(y),~ \text{and}~ f(\alpha_A(x))=\alpha_B(f(x)), \]
   for all $x,y \in A.$
   \item  Let  $(A, \ast_A ,\alpha_A)$ be  a  Hom-associative  algebra.   A  submodule $B\subseteq A$ is called a Hom-associative subalgebra of $A$ if $B$ is closed under the multiplication $\ast_A$ and $\alpha_A(B)\subseteq B$.
   \item  Let $(A, \ast_A ,\alpha_A)$ be a Hom-associative algebra. A submodule $I \subseteq A$ is called a Hom-ideal of $A$ if $x\ast_A y  \in I$ , $y \ast_A x \in I$ for all $x\in I,$ $y \in A$, and $\alpha_A(I) \subseteq I$.
\end{enumerate}
\end{definition}
\begin{definition}\label{derivation}
For any non-negative integer $k$, a linear map $D:A \to A$ is called an $\alpha_{A}^{k}$-derivation of involutive Hom-associative algebra $(A,\ast_A,\alpha_A)$, if
\begin{eqnarray*}
D \circ \alpha_{A}^{k} &=& \alpha_{A}^{k} \circ D, \\
D \circ (x \ast_A y) &=& D(x)\ast_A \alpha_{A}^{k}(y) + \alpha_{A}^{k}(x) \ast_A D(y).
\end{eqnarray*}
\end{definition}
\begin{definition}
Let $(V,\alpha_V)$ be an involutive Hom-module.  A free involutive Hom-associative algebra on $V$ is an involutive Hom-associative algebra $(F_{IH A}(V),\ast_F,\alpha_F)$ together with a morphism of Hom-modules $j_V:  (V,\alpha_V)\to (F_{IHA}(V),\alpha_F)$ such
 that, for any involutive Hom-associative algebra $(A,\ast_A,\alpha_A)$ together with a morphism of Hom-modules $f:  (V,\alpha_V)\to (A,\alpha_A)$, there is a unique morphism of Hom-associative algebras
$f:(F_{IHA}(V),\ast_F,\alpha_F)\to (A,\ast_A,\alpha_A)$ such that $f=f \circ j_V$.
\end{definition}

 \begin{definition}
 A Hom-Lie algebra is a triple $(\mathfrak{g}, [\cdot,\cdot]_{\mathfrak{g}},\beta)$ consisting of a vector space $\mathfrak{g}$, a skew-symmetric bilinear map (bracket) $[\cdot,\cdot]_{\mathfrak{g}}: \mathfrak{g} \times \mathfrak{g} \to \mathfrak{g}$ and a linear map $\beta: \mathfrak{g} \to \mathfrak{g}$ satisfying the following Hom-Jacobi identity:
 \begin{equation}\label{Hom-Jacobi identity}
     [\beta(u),[v, w]_{\mathfrak{g}}]_{\mathfrak{g}}+[\beta(v),[w,u]_{\mathfrak{g}}]_{\mathfrak{g}}+[\beta(w),[u,v]_{\mathfrak{g}}]_{\mathfrak{g}}=0.
 \end{equation}
Hom-Lie algebra is called a {\it multiplicative} Hom-Lie algebra if $\beta$ satisfies
\begin{equation}
    \beta([u,v]_{\mathfrak{g}})=[\beta(u), \beta(v)]_{\mathfrak{g}}.
\end{equation}
\end{definition}
A Hom-Lie algebra $(\mathfrak{g}, [\cdot,\cdot]_{\mathfrak{g}},\beta)$ is called {\it involutive} if $\beta^{2}=id_{\mathfrak{g}}$.
Note that the classical Lie algebra can be recovered when $\beta=id_{\mathfrak{g}}$, with the identity \eqref{Hom-Jacobi identity} becoming the Jacobi identity for Lie algebras.
 \begin{definition}
 A morphism of Hom-Lie algebras $$f: (\mathfrak{g},{[\cdot,\cdot]}_\mathfrak{g},\beta_\mathfrak{g}) \to (\mathfrak{h},[\cdot,\cdot]_\mathfrak{h},\beta_\mathfrak{h})$$ is a $k$-linear map $f:\mathfrak{g}\to \mathfrak{h}$ such that
 $$f([x,y]_{\mathfrak{g}})=[f(x),f(y)]_\mathfrak{h} \text{ and } f(\beta_{\mathfrak{g}}(x))=\beta_{\mathfrak{h}}(f(x)) \text{ for all } x\in \mathfrak{g}.$$
 \end{definition}

Hom-associative algebras were introduced in \cite{ms:homstructure}, and shown to be Hom-Lie admissible, i.e. any Hom-associative algebra $(A,\ast_A,\alpha_A)$ yields a Hom-Lie algebra $(A, [\cdot,\cdot]_{A},\beta_A)$ with $\beta_A=\alpha_A$ and $[x,y]_A=x\ast_A y-y\ast_A x$ for $x,y \in A$.

For simplicity, we will restrict our considerations to multiplicative Hom-Lie algebras and multiplicative Hom-associative algebras, meaning that the twisting map is not only linear, but also an endomorphism of the Hom-Lie algebra or Hom-associative algebra respectively.
An interesting important problem is to understand completely the role of the multiplicatives restriction and extend the results and constructions from multiplicative to general, not necessarily multiplicative, Hom-Lie algebras and Hom-associative algebras.

\begin{definition}[\cite{GuoZhZheUEPBWHLieA}]
 Let $(\mathfrak{g}, {[\cdot,\cdot]}_\mathfrak{g},\beta)$ be a Hom-Lie algebra. A universal enveloping Hom-associative algebra of $\mathfrak{g}$ is a Hom-associative algebra ${\mathfrak{U}}_\mathfrak{g}=({\mathfrak{U}}_{\mathfrak{g}},{\ast}_\mathfrak{g}, {\alpha}_{\mathfrak{U}})$, together with a morphism $\phi_{\mathfrak{g}}: (\mathfrak{g}, [\cdot,\cdot]_\mathfrak{g},\beta) \to ({\mathfrak{U}}_\mathfrak{g},{[\cdot,\cdot]}_{\mathfrak{U}_\mathfrak{g}}, {\beta_\mathfrak{U}}_{\mathfrak{g}})$ of Hom-Lie algebras, that satisfies the universal property.
 \end{definition}
 The following lemma describes the universal property in the involutive case.
 \begin{lemma}[\cite{GuoZhZheUEPBWHLieA}] \label{invouniversal}
 Let $(\mathfrak{g}, [\cdot,\cdot]_{\mathfrak{g}},\beta_{\mathfrak{g}})$ be an involutive multiplicative Hom-Lie algebra.
 \begin{enumerate}[label=\upshape{(\alph*)},left=0pt]
     \item Let $(A, \ast_A, \alpha_A)$ be a multiplicative Hom-associative algebra,
     $$f: (\mathfrak{g}, [\cdot,\cdot]_{\mathfrak{g}},\beta_{\mathfrak{g}}) \to (A, [\cdot,\cdot]_A, \beta_A)$$
     be a morphism of Hom-Lie algebras, and $B$ be the multiplicative Hom-associative subalgbera of $A$ generated by $f(\mathfrak{g})$. Then $B$ is involutive.
     \item The universal enveloping multiplicative Hom-associative algebra $(\mathfrak{U}_{\mathfrak{g}}, \phi_{\mathfrak{g}})$ of \\ $(\mathfrak{g},[\cdot,\cdot]_{\mathfrak{g}},\beta_{\mathfrak{g}})$ is involutive.
     \item In order to verify the universal property of $(\mathfrak{U}_{\mathfrak{g}}, \phi_{\mathfrak{g}})$, we only need to consider involutive multiplicative Hom-associative algebras $A:= (A, {\ast}_{A},\alpha_{A})$.
 \end{enumerate}
 \end{lemma}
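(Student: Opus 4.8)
The plan is to deduce all three parts from a single observation about the square of the twisting map. For part (a), I would first note that since $f$ is a morphism of Hom-Lie algebras and the associated bracket on $A$ carries $\beta_A=\alpha_A$, we have $\alpha_A(f(x))=f(\beta_{\mathfrak{g}}(x))$ for all $x\in\mathfrak{g}$; applying $\alpha_A$ once more and using $\beta_{\mathfrak{g}}^2=\mathrm{id}_{\mathfrak{g}}$ gives $\alpha_A^2(f(x))=f(\beta_{\mathfrak{g}}^2(x))=f(x)$. Thus $\alpha_A^2$ fixes every element of $f(\mathfrak{g})$. Next I would introduce the fixed-point set $A^{\mathrm{inv}}=\{a\in A:\alpha_A^2(a)=a\}$ and check that it is a multiplicative Hom-associative subalgebra of $A$: it is clearly a submodule, it is stable under $\alpha_A$ because $\alpha_A$ commutes with $\alpha_A^2$, and it is closed under $\ast_A$ because the multiplicativity of $\alpha_A$ (hence of $\alpha_A^2$) gives $\alpha_A^2(a\ast_A b)=\alpha_A^2(a)\ast_A\alpha_A^2(b)=a\ast_A b$ for $a,b\in A^{\mathrm{inv}}$. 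Since $A^{\mathrm{inv}}$ contains $f(\mathfrak{g})$ and $B$ is by definition the smallest multiplicative Hom-associative subalgebra containing $f(\mathfrak{g})$, we conclude $B\subseteq A^{\mathrm{inv}}$, which is exactly the statement $\alpha_A^2|_B=\mathrm{id}_B$, i.e. $B$ is involutive.

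Part (b) would then be immediate: the universal enveloping algebra $\mathfrak{U}_{\mathfrak{g}}$ is generated as a multiplicative Hom-associative algebra by $\phi_{\mathfrak{g}}(\mathfrak{g})$ (from its construction in \cite{GuoZhZheUEPBWHLieA}), so applying part (a) with $A=\mathfrak{U}_{\mathfrak{g}}$, $f=\phi_{\mathfrak{g}}$ and $B=\mathfrak{U}_{\mathfrak{g}}$ yields that $\mathfrak{U}_{\mathfrak{g}}$ is involutive. For part (c), I would argue that an arbitrary test object can always be replaced by an involutive one without loss. Given a multiplicative Hom-associative algebra $A$ and a Hom-Lie morphism $f:\mathfrak{g}\to A$, let $B$ be the subalgebra generated by $f(\mathfrak{g})$; by part (a) it is involutive, and $f$ corestricts to a Hom-Lie morphism $\mathfrak{g}\to B$ (the commutator bracket stays in $B$ since $B$ is closed under $\ast_A$). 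Assuming the universal property has already been verified against involutive algebras, this corestriction extends uniquely to a Hom-associative morphism $\widetilde{f}:\mathfrak{U}_{\mathfrak{g}}\to B$; composing with the inclusion $B\hookrightarrow A$ produces the required extension to $A$. Uniqueness transfers because any Hom-associative morphism $g:\mathfrak{U}_{\mathfrak{g}}\to A$ with $g\circ\phi_{\mathfrak{g}}=f$ must send the generators $\phi_{\mathfrak{g}}(\mathfrak{g})$ into $f(\mathfrak{g})\subseteq B$, hence has image in $B$, and is therefore forced to coincide with $\widetilde{f}$ by the involutive universal property.

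The only genuinely non-formal step is part (a), and within it the verification that $A^{\mathrm{inv}}$ is closed under $\ast_A$: this is precisely where the multiplicativity hypothesis on $\alpha_A$ is indispensable, since without the identity $\alpha_A^2(a\ast_A b)=\alpha_A^2(a)\ast_A\alpha_A^2(b)$ the fixed-point set need not be a subalgebra and the whole argument collapses. Parts (b) and (c) are then purely formal consequences of (a), their only reliance on external input being the standard fact that $\mathfrak{U}_{\mathfrak{g}}$ is generated by the image of $\phi_{\mathfrak{g}}$; I would make a point of stating this generation property explicitly before invoking it.
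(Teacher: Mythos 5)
Your proposal is correct, but note that this paper never proves Lemma \ref{invouniversal} at all: it is imported with a citation from \cite{GuoZhZheUEPBWHLieA}, so there is no internal proof to compare against. Your reconstruction is essentially the argument of that cited source: part (a) rests on $\alpha_A^2\circ f=f\circ\beta_{\mathfrak{g}}^2=f$ together with multiplicativity of $\alpha_A$; the only organizational difference is that you argue ``from above'' (the fixed-point set of $\alpha_A^2$ is a multiplicative Hom-associative subalgebra containing $f(\mathfrak{g})$, hence containing $B$), whereas the reference argues ``from below'' (induction on products of generators, each fixed by $\alpha_A^2$) — a cosmetic distinction, since both hinge on exactly the identity $\alpha_A^2(a\ast_A b)=\alpha_A^2(a)\ast_A\alpha_A^2(b)$ that you single out as the essential use of multiplicativity. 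Parts (b) and (c) are handled as in the reference: (b) follows from (a) once one knows $\mathfrak{U}_{\mathfrak{g}}$ is generated by $\phi_{\mathfrak{g}}(\mathfrak{g})$, and (c) by factoring an arbitrary test morphism through the involutive subalgebra generated by its image, with uniqueness transferring because any competing morphism has image inside that subalgebra. You are right to flag the generation fact as the one external input; it holds by the explicit construction in \cite{GuoZhZheUEPBWHLieA}, and can alternatively be deduced from uniqueness in the universal property (the corestriction $\mathfrak{U}_{\mathfrak{g}}\to B$ composed with the inclusion $B\hookrightarrow\mathfrak{U}_{\mathfrak{g}}$ fixes $\phi_{\mathfrak{g}}(\mathfrak{g})$, hence equals the identity, forcing $B=\mathfrak{U}_{\mathfrak{g}}$).
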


 \begin{definition}
A linear subspace $\mathfrak{s} \subseteq \mathfrak{g}$ is called a Hom-Lie subalgebra of a Hom-Lie algebras $(\mathfrak{g}, [\cdot,\cdot]_{\mathfrak{g}},\beta)$ if $\beta(\mathfrak{s})\subseteq \mathfrak{s}$ and $\mathfrak{s}$ is closed under the bracket operation $[\cdot,\cdot]_{\mathfrak{g}}$:
\[\forall s_1, s_2 \in \mathfrak{s}: \quad [s_1, s_2]_{\mathfrak{g}} \in \mathfrak{s}.\]
\end{definition}

Let $(\mathfrak{g},[\cdot,\cdot]_{\mathfrak{g}},\beta)$ be a multiplicative Hom-Lie algebra. For any nonnegative integer $k$,denote by $\beta^{k}$ the $k$-times composition of $\beta$, i.e.
\[\beta^{k}=\beta \dots \beta ~(k\text{-times}).\]
In particular, $\beta^{0}=Id$ and $\beta^{1}=\beta$.

\begin{definition}
For any nonnegative integer $k$, a linear map $ d :\mathfrak{g} \to \mathfrak{g}$ is called a $\beta^{k}$-derivation of the involutive Hom-Lie algebra $(\mathfrak{g}, [\cdot,\cdot]_{\mathfrak{g}},\beta)$, if
\begin{eqnarray}
[d,\beta]&=& 0, \text{ that is, } d \circ \beta^{k} = \beta^{k} \circ d, \\
\forall u,v \in \mathfrak{g}: \quad
d[u,v]_{\mathfrak{g}} &=&
[d(u), \beta^{k}(v)]_{\mathfrak{g}}+[\beta^{k}(u),d(v)]_{\mathfrak{g}}.
\end{eqnarray}
\end{definition}
\begin{example}
Let $(\mathfrak{g}, [\cdot,\cdot]_{\mathfrak{g}},\alpha)$ be an involutive multiplicative Hom-Lie algebra. For $x \in \mathfrak{g}$, let consider $\alpha(x)=x$, then $ad_x : \mathfrak{g} \to \mathfrak{g}$ defined by $ad_x (y) = [x,y]_{\mathfrak{g}}$ for all $y\in \mathfrak{g}$ is an $\alpha$-derivation of $(\mathfrak{g}, [\cdot,\cdot]_{\mathfrak{g}},\alpha)$.
\end{example}


\section{HNN-extension of involutive Hom-associative algebras} \label{sec:HNNextensioninvolHomassocalgs}
Let $(A, \ast_{A} ,\alpha_A)$ be an involutive Hom-associative algebra over ring of integers. Let $(B_i, \ast_A, \alpha_{A|_{B_i}})$ $(i \in I)$ be a family of Hom-associative subalgebras of $A$ as defined in Definition 1 (g), with injective morphisms $\theta_i : B_i \to A$, and for each $i \in I$,  a $\theta_i$-derivation $\delta_i: B_i \to A$ such that $\alpha_A$ commutes with $\theta_i$ and $\delta_i$. The associated HNN-extension is presented as  \[H = \langle A,B_i,t_i, \delta_i, \theta_i : i \in I \rangle,\] which is an involutive Hom-associative algebra $H:=(A\cup \{t_i\},\ast_H, \alpha_H)$ in such a way that $x \ast_H y=\alpha_H({x \ast_A y})$, where $\alpha_H(t_i)=t_i$ and $\alpha_H(a)=\alpha_A(a)$ along with a homomorphism $\phi : (A, \ast_A, \alpha_A) \to (H, \ast_H, \alpha_H)$ with the following conditions:
\begin{enumerate}[label=\upshape{(\roman*)},left=0pt]
 \item \label{cond1} $t_i \ast_H (\phi(b)) - \phi(\theta_i(b)) \ast_H t_i=\phi(\delta_i(b))$ for all $b \in B_i$ and all $i \in I$.
 \item \label{cond2} Given any involutive Hom-associative algebra $(S, \ast_S, \alpha_S)$ with elements $\sigma_i \in S$ satisfying
 $\alpha_S(\sigma_i)=\sigma_i$, a morphism $f: (A,\alpha_A) \to (S, \alpha_S)$ such that $\sigma_i \ast_S \alpha_S(f(b)) - \alpha_S (f(b)) \ast_S \sigma_i = f(\delta_i(b))$ for all $b\in B_i$ and $i \in I$, there exists a unique morphism $\theta: (H, \ast_H, \alpha_H) \to (S,\ast_A, \alpha_A)$ such that $\theta(t_i)=\sigma_i$ and $\theta(\phi(a))=f(a)$ for all $a\in A$.
 \end{enumerate}
Assume a single letter $t$ in the condition \ref{cond1} of construction of HNN-extension of involutive multiplicatve Hom-associative algebra. Since $\delta$ is an $\alpha_{A}$-derivation,
\begin{align*}
  \delta(\alpha_A(b))&=t \ast_{H} \alpha_A(b) - \alpha_A(b) \ast_{H} t\\
                     &= \alpha_{H}({ t \ast_A \alpha_A(b)}) - \alpha_{H}({ \alpha_A(b) \ast_A t}) \quad \text{(by definition of $\ast_{H}$)}\\
                     &={\alpha_{H}(t) \ast_A \alpha_{A}^{2}(b)} - { \alpha_{A}^{2}(b) \ast_A \alpha_{H}(t)} \quad \text{(by Def. \ref{def-involutive} (c), (d))} \\
                     &={t\ast_A b}-{b \ast_A t} = \alpha_A(\delta(b)),
\end{align*}
which implies that in the construction of HNN-extension for the case of involutive Hom-associative algebras, it is essential to consider the multiplicative property. It is worth pointing out that the second property of $\alpha$-derivations in Definition \ref{derivation} is straightforward by Hom-associativity.

A left Hom-$B_i$-module ${A}/{B_i}$ is a Hom-module $({A}/{B_i}, \alpha_{A/B_i})$ that comes equipped with a left $B_i$-action,
$ B_i \otimes {A}/{B_i} \to {A}/{B_i},$ with $b \ast_{A/B_i} (a+B_i) =(b \ast_{A} a) +B_i$ and $ \alpha_{A/B_i}: A/B_i \to A/B_i$ with $\alpha_{A/B_i} (a+B_i) = \alpha_A(a) +B_i$, for all $b \in B_i$. Let $X_i$ be a free basis of free left Hom-$B_i$-module $A/B_i$. We define a normal sequence as
 \[(t_{i_{1}} \ast_A \alpha_A(x_1)) \ast_A (t_{i_{2}} \ast_A \alpha_A(x_2)) \ast_A \dots \ast_A (t_{i_{r}} \ast_A \alpha_A(x_r)), \] with $i_{j} \in I$ and $x_\alpha \in X_{i_j}$ for $1 \leq \alpha \leq r$. The set of all normal sequences is denoted by $V$.

Theorem \ref{EmbedingHomAsso} concerns the embeddability of involutive Hom-asso\-ciative algebra into its HNN-extension. We follow the Lichtman and Shirvani's approach \cite{LichtmanShirvani1997:HNNextLiealg} in order to prove that.

\begin{theorem}\label{EmbedingHomAsso}
Let $(A, \ast_A, \alpha_A)$ be an involutive Hom-associative algebra over ring of integers, $B_i$ a family of Hom-associative subalgberas, with injective homomorphisms $\theta_i: B_i \to A$, a $\theta_i$-derivations  $\delta_i: B_i \to A$. Assume that ${A}/{B_i}$ is a free left Hom-$B_i$-module for all $i$, and let $(H,\phi)$ be the corresponding HNN-extension as above. Then the map $\phi$ is an embedding of $A$ into $H$.
\end{theorem}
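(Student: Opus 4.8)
The plan is to show that the map $\phi$ is injective by constructing an explicit faithful representation of $H$ on a module built from the normal sequences $V$, following the Lichtman--Shirvani strategy. The key observation is that the universal property (condition \ref{cond2}) reduces the embedding question to producing a single involutive multiplicative Hom-associative algebra $(S,\ast_S,\alpha_S)$, elements $\sigma_i \in S$ with $\alpha_S(\sigma_i)=\sigma_i$, and a morphism $f\colon A \to S$ that is injective, and such that the derivation relations $\sigma_i \ast_S \alpha_S(f(b)) - \alpha_S(f(b)) \ast_S \sigma_i = f(\delta_i(b))$ hold. Indeed, if such data exist, the induced morphism $\theta\colon H \to S$ satisfies $\theta \circ \phi = f$, and since $f$ is injective, $\phi$ must be injective as well.

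First I would take $S$ to be the Hom-associative algebra of ($\alpha$-compatible) endomorphisms of a suitable Hom-module $M$, where $M$ is assembled from $A$ together with the free left Hom-$B_i$-modules $A/B_i$ via the free bases $X_i$. The natural candidate is the tensor/semigroup module spanned by the normal sequences, on which $A$ acts by left multiplication and each $t_i$ acts by a shift-type operator that prepends the letter $t_i$ and incorporates the derivation $\delta_i$ through the splitting $A = B_i \oplus (\text{span of } B_i X_i)$ afforded by freeness. Second, I would define $f\colon A \to S$ as this left-multiplication (regular) representation and define $\sigma_i$ as the operators just described; one then verifies that $\sigma_i$ is $\alpha_S$-invariant (guaranteed by $\alpha_H(t_i)=t_i$), that $f$ is a morphism of Hom-associative algebras commuting with the twisting maps, and that the commutator relation reproduces $f(\delta_i(b))$. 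Third, injectivity of $f$ is immediate because the regular action of $A$ on the module containing a copy of $A$ is faithful (the element $1$, or a generating set, is moved nontrivially).

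The crucial technical point, and where the freeness hypothesis on $A/B_i$ is essential, is well-definedness of the operators $\sigma_i$: one must check that the shift-with-derivation operator respects the Hom-$B_i$-module relations, i.e. that writing $a = b + \sum_k b_k \ast_A x_k$ in the decomposition and sending it to $\phi(\delta_i(b)) + \sum_k \dots$ does not depend on the representation, which is exactly what the existence of a \emph{free} basis $X_i$ secures. One must also confirm the Hom-associativity identity $\alpha_S(F)\ast_S(G\ast_S H) = (F\ast_S G)\ast_S\alpha_S(H)$ for the operator algebra $S$ and check the multiplicative and involutive properties $\alpha_S^2 = \mathrm{id}$; these follow from $\alpha_A^2 = \mathrm{id}$ and the compatibility of $\alpha_A$ with $\theta_i$ and $\delta_i$ assumed in the construction.

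The main obstacle I anticipate is verifying the derivation/commutator relation at the operator level while keeping track of the twisting map $\alpha_A$ throughout. Because the HNN multiplication is defined by $x \ast_H y = \alpha_H(x \ast_A y)$, every product carries an extra application of $\alpha_H$, so the computation showing $\sigma_i \ast_S \alpha_S(f(b)) - \alpha_S(f(b)) \ast_S \sigma_i = f(\delta_i(b))$ must absorb these twists correctly; the earlier displayed computation in the paper (showing $\delta(\alpha_A(b)) = \alpha_A(\delta(b))$ and forcing multiplicativity) is exactly the prototype for this bookkeeping. Once the operators are shown to be well-defined and the relations verified, the universal property closes the argument and $\phi$ is an embedding.
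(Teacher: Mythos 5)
Your proposal follows essentially the same route as the paper's proof: both realize the Lichtman--Shirvani strategy by letting $A$ act by left multiplication on the free left Hom-$A$-module $Q=\oplus_{u\in V}Au$ built on the normal sequences, defining $\sigma_i\in\mathrm{End}_{\mathbb{Z}}(Q)$ as the shift-with-derivation operators (well-defined by freeness of $A/B_i$), verifying the commutator relation $\sigma_i(\bar b(q))-\overline{\theta_i(b)}(\sigma_i(q))=\overline{\delta_i(b)}(q)$, and invoking the universal property (ii) to obtain $\theta\colon H\to S$ with $\theta\circ\phi$ the faithful regular representation. Your write-up even makes explicit the final step (injectivity of $\theta\circ\phi$ forces injectivity of $\phi$) that the paper leaves implicit, so it is a faithful and slightly more complete rendering of the same argument.
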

\begin{proof}
Let us consider the free left Hom-$A$-module on the set of normal sequences, $V$, and denote it by
$$ Q=(\oplus_{u \in V} Au, \alpha_Q), \quad \alpha_Q(u_1,\dots,u_r)=(\alpha_H(u_1),\dots,\alpha_H(u_r)).$$
Consider the morphism of $(A,\alpha_A)$ into $\text{S = (End}_{\mathbb{Z}}(Q),\alpha_S)$ mapping $a \in A$ to left multiplication by $a$ on every factor denoted by $a \mapsto \bar{a}$ and $\alpha_S=\alpha_A$. In the sequel, we need to define suitable $\sigma_i \in S$ for all $i \in I$. If $q \in Q$ is written as
\begin{align*}
             q = \sum_{u \in V} \sum_{x \in X_i} {(b_{x,u} \ast_{{A}/{B}} x) \ast_A u}&=\sum_{u \in V} \sum_{x \in X_i}{(b_{x,u} \ast_{A} \alpha_A(x)) \ast_A u}\\
             & =\sum_{u \in V} \sum_{x \in X_i}{b_{x,u} \ast_{A} (\alpha_A(x) \ast_A u)}
\end{align*}
for $b_{x,u} \in B_i $, define
\[\sigma_i(q)= \sum_{u \in V} \sum_{x \in X_i} ({ \theta_i(b_{x,u}) \ast_A ((t_i \ast_A \alpha_A(x)) \ast_A u)} + {\delta_i(b_{x,u}) \ast_A (\alpha_A(x) \ast_A u)}).\]
We have $\sum_{x\in X_i} (\delta_i(b_{x,u}) \ast_A \alpha_A(x)) \in A$ and every ${((t_i \ast_A \alpha_A(x))\ast_A u)} \in V$. For any element $b \in B_i$ $(i \in I)$, we recall that the left multiplication by $b$ is denoted by $\bar{b}$, so we have
\begin{align*}
{\sigma_i( \bar{b}(q))}&= \sigma_i(\sum_{u\in V} \sum_{x \in X_i} {((b \ast_B b_{x,u}) \ast_A (\alpha_A(x) \ast_A u)}) )\\
                                      &= \sum_{u,x} ({\theta_i(b \ast_B b_{x,u}) \ast_A ((t_i \ast_A \alpha_A(x)) \ast_A u)})\\
                                      &+ \sum_{u,x} {(\delta_i(b \ast_B b_{x,u}) \ast_A (\alpha_A(x) \ast_A u})),
\end{align*}
and
\begin{align*}
    {\overline{\theta_i(b)} (\sigma_i(q))} &= \sum_{i} (\theta_i(b)) \ast_A (\sum_{u \in V} \sum_{x \in X_i} ({ \theta_i(b_{x,u}) \ast_A ((t_i \ast_A \alpha_A(x)) \ast_A u)})) \\
    & + \sum_{i} { (\theta_i(b)) }\ast_A (\sum_{u \in V} \sum_{x \in X_i} {(\delta_i(b_{x,u}) \ast_A (\alpha_A(x) \ast_A u)})).
\end{align*}

Hence, \[ \sigma_i (\bar{b}(q)) - \overline{\theta_i(b)}( \sigma_i (q))=\sum_{u,x} { ((\delta_i(b) \ast_A b_{x,u}) \ast_A (\alpha_A(x)\ast_A u))}= {\overline{\delta_i(b)}(q)}.\] Therefore, the property (2) implies that there exists $\theta: (H, \ast_H, \alpha_H) \to (S, \ast_S, \alpha_S)$ such that $\theta(t_i)=\sigma_i$ and $\theta (\phi(a))=\bar{a}$ for all $a \in A$.
\end{proof}

\section{HNN-extension of involutive Hom-Lie algebras}
\label{sec:HNNextensioninvolHomLiealgs}
Let $(A, {\ast_{A}}, {\alpha}_{A})$ be an arbitrary Hom-associative algebra, and let $(A, {[\cdot,\cdot]}_{A}, {\beta}_{A})$ be the Hom-Lie algebra defined by
\[ {[x,y]}_{A} = x {\ast}_{A} y - y {\ast}_{A} x,\]
and $ {\beta}_{A} = {\alpha}_{A}$, for $x,y \in A$. If $(\mathfrak{g},{[\cdot,\cdot]}_{\mathfrak{g}},\beta_{\mathfrak{g}})$ is an involoutive Hom-Lie algebra, then  $({\mathfrak{U}}_{\mathfrak{g}} ,  {\phi}_{g})$ is called a universal enveloping Hom-associative algebra of $\mathfrak{g}$, if
$$ {\phi}_{\mathfrak{g}} : (\mathfrak{g},[\cdot,\cdot]_{\mathfrak{g}},\beta_{\mathfrak{g}}) \to ({\mathfrak{U}}_{\mathfrak{g}}, {[\cdot,\cdot]}_{\mathfrak{U}_{\mathfrak{g}}}, {\beta}_{\mathfrak{U}_{\mathfrak{g}}})$$
is a homomorphism of Hom-Lie algebras,
\[ \phi_{\mathfrak{g}}([x,y]_{\mathfrak{g}}) = [\phi_{\mathfrak{g}}(x),\phi_{\mathfrak{g}}(y)]_{{\mathfrak{U}}_{\mathfrak{g}}}, \qquad
{\phi}_{\mathfrak{g}} ( {\beta}_{\mathfrak{g}}(x))={\beta}_{\mathfrak{U}_{\mathfrak{g}}} ({\phi}_{\mathfrak{g}} (x)), \]
satisfying the following universal property: for any involutive Hom-associative algebra $A = (A, {{\ast}_{A}}, {\alpha}_{A})$ and any Hom-Lie algebra morphism $\varepsilon : (\mathfrak{g}, [\cdot,\cdot]_\mathfrak{g}, {\beta}_{\mathfrak{g}}) \to (A, [\cdot,\cdot]_{A}, \beta_{A})$, there exists a unique morphism $\eta: {\mathfrak{U}}_{\mathfrak{g}} \to A$ of Hom-associative algebras such that $\eta {\phi}_{\mathfrak{g}} = \varepsilon $.  For any involutive Hom-Lie algebra there exists a universal enveloping Hom-associative algebra, which is involutive and Poincare-Birkhoff-Witt theorem is valid for it. This shows that the map ${\phi}_{\mathfrak{g}}$ is injective, and we can say that every $\beta_{\mathfrak{g}}$-derivation of involutive Hom-Lie algebra $(\mathfrak{g}, [\cdot,\cdot]_\mathfrak{g}, {\beta}_{\mathfrak{g}})$ extends to ${\beta}_{\mathfrak{U}_{\mathfrak{g}}}$-derivation of ${\mathfrak{U}}_{\mathfrak{g}}$.

\begin{definition} \label{HNN-Hom-Lie}
Let $(\mathfrak{g},[\cdot,\cdot]_\mathfrak{g},\beta_\mathfrak{g})$ be an involutive Hom-Lie algebra and $\mathfrak{s}$ be a subalgebra. Assume that $d: \mathfrak{s} \to \mathfrak{g}$ is a $\beta_\mathfrak{g}$-derivation. The associated HNN-extension is given by the following presentation
\[ \mathfrak{h} := \langle \mathfrak{g} , t: d(s)=[t,s]_{\mathfrak{h}}, s\in \mathfrak{s} \rangle,\]
which is an involutive Hom-Lie algebra $(\mathfrak{h}, [\cdot,\cdot]_\mathfrak{h}, {\beta}_{\mathfrak{h}})$ with ${\beta}_{\mathfrak{h}}(t)=t$, ${\beta}_{\mathfrak{h}}(g)= {\beta}_{\mathfrak{g}}(g)$ for $g \in \mathfrak{g}$. This means that the presentation of $\mathfrak{g}$ is augmented by adding a new generating symbol $t$, and for each $s \in \mathfrak{s}$, the relation $[t,s]_{\mathfrak{h}}=d(s)$ is added. We note that $[g_1,g_2]_\mathfrak{h}=[g_1,g_2]_\mathfrak{g}$, for all $g_1,g_2 \in \mathfrak{g}$.
\end{definition}
Let assume that in the Definition \ref{HNN-Hom-Lie}, $\mathfrak{s}=\mathfrak{g}$, therefore, $d$ is a $\beta_\mathfrak{g}$-derivation of $\mathfrak{g}$ and $\mathfrak{h}$ is then the semi-direct product of $\mathfrak{g}$ with a one-dimensional involutive Hom-Lie algebra which acts on $\mathfrak{g}$ via $d$. In order to make this special case more clear, we recall the concepts of Hom-action and semidirect product of Hom-Lie algebras in the sequel in accordance with \cite{CasasGarciaMartinezJPAA2020:abelextscrossmodHomLiealg}.
\begin{definition}
Let $(\mathfrak{l},\alpha_{\mathfrak{l}})$ and $(\mathfrak{m},\alpha_{\mathfrak{m}})$ be Hom-Lie algebras. A Hom-action from $(\mathfrak{l},\alpha_{\mathfrak{l}})$ on $(\mathfrak{m},\alpha_{\mathfrak{m}})$ is expressed by a bilinear map $$\sigma:\mathfrak{l} \otimes \mathfrak{m} \to \mathfrak{m}, \quad \sigma(x \otimes m)= x \sq m$$ such that
\begin{enumerate}[label=\upshape{(\alph*)},left=0pt]
    \item  \quad $[x,y] \sq \alpha_{\mathfrak{m}} (m) = \alpha_{\mathfrak{l}}(x) \sq (y \sq m) - \alpha_{\mathfrak{l}}(y) \sq (x \sq m)$,
    \item \quad $\alpha_{\mathfrak{l}}(x) \sq [m,m^\prime]=[x \sq m, \alpha_{\mathfrak{m}}(m^\prime)]+[\alpha_{\mathfrak{m}}(m), x \sq m^{\prime}]$,
    \item \quad $\alpha_{\mathfrak{m}}(x \sq m)= \alpha_{\mathfrak{l}}(x) \sq \alpha_{\mathfrak{m}}(m)$,
\end{enumerate}
for all $x,y \in \mathfrak{l}$ and $m,m^{\prime} \in \mathfrak{m}$.
\end{definition}

\begin{definition}[\cite{CasasGarciaMartinezJPAA2020:abelextscrossmodHomLiealg}]
Let $(\mathfrak{l},\alpha_{\mathfrak{l}})$ and $(\mathfrak{m},\alpha_{\mathfrak{m}})$ be Hom-Lie algebras with an action from $(\mathfrak{l},\alpha_{\mathfrak{l}})$ on $(\mathfrak{m},\alpha_{\mathfrak{m}})$. The semidirect product $(\mathfrak{m} \rtimes \mathfrak{l} , \Tilde{\alpha}) $ is the Hom-Lie algebra with underlying $K$-vector space $\mathfrak{m} \oplus \mathfrak{l}$, with bracket
\[ [(m_1,x_1),(m_2,x_2)]=([m_1,m_2]+ x_1 \sq m_2 - x_2 \sq m_1, [x_1,x_2])\]
and endomorphism
\[ \Tilde{\alpha}: \mathfrak{m} \oplus \mathfrak{l} \to \mathfrak{m} \oplus \mathfrak{l},  \qquad \Tilde{\alpha}(m,x)=(\alpha_{\mathfrak{m}}(m), \alpha_{\mathfrak{l}}(x)) \]
for all $x,x_1,x_2 \in \mathfrak{l}$ and $m,m_1,m_2 \in \mathfrak{m}$.
\end{definition}

If in the Definition \ref{HNN-Hom-Lie} of HNN-extension of involutive Hom-Lie algebras, $\beta_\mathfrak{g}$-deri\-vation map is defined on the whole involutive Hom-Lie algebra $(\mathfrak{g},[\cdot,\cdot]_\mathfrak{g},\beta_{\mathfrak{g}})$, then a semidirect product of
one-dimensional involutive Hom-Lie algebra with $\mathfrak{g}$ with respect to $\beta_\mathfrak{g}$-derivation map will be obtained.

\begin{theorem}
Any involutive Hom-Lie algebra embeds into its HNN-extension.
\end{theorem}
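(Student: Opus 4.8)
The plan is to reduce the statement to the Hom-associative embedding of Theorem~\ref{EmbedingHomAsso} by passing to universal enveloping Hom-associative algebras. Keep the notation of Definition~\ref{HNN-Hom-Lie}: let $(\mathfrak{g},[\cdot,\cdot]_\mathfrak{g},\beta_\mathfrak{g})$ be the involutive Hom-Lie algebra, $\mathfrak{s}\subseteq\mathfrak{g}$ the subalgebra, $d\colon\mathfrak{s}\to\mathfrak{g}$ the $\beta_\mathfrak{g}$-derivation, and $\mathfrak{h}$ the associated HNN-extension. First I would form the universal enveloping Hom-associative algebra $(\mathfrak{U}_\mathfrak{g},\phi_\mathfrak{g})$. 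By Lemma~\ref{invouniversal}(b) it is involutive and multiplicative, and by the Poincar\'e--Birkhoff--Witt theorem recalled just above, the map $\phi_\mathfrak{g}$ is injective; hence $\mathfrak{g}$ embeds into $\mathfrak{U}_\mathfrak{g}$ as a Hom-Lie subalgebra under the commutator bracket $[x,y]=x\ast_\mathfrak{g}y-y\ast_\mathfrak{g}x$.

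Second, I would transport the derivation to the enveloping level. Let $B\subseteq\mathfrak{U}_\mathfrak{g}$ be the involutive Hom-associative subalgebra generated by $\phi_\mathfrak{g}(\mathfrak{s})$; this is a copy of the enveloping algebra $\mathfrak{U}_\mathfrak{s}$, and PBW provides the inclusion $\theta\colon B\hookrightarrow\mathfrak{U}_\mathfrak{g}$. Using the extension property stated immediately before Definition~\ref{HNN-Hom-Lie}, namely that every $\beta_\mathfrak{g}$-derivation extends to a $\beta_{\mathfrak{U}_\mathfrak{g}}$-derivation, I would extend $d$ to an $\alpha$-derivation $\delta\colon B\to\mathfrak{U}_\mathfrak{g}$ with $\delta\circ\phi_\mathfrak{g}|_\mathfrak{s}=\phi_\mathfrak{g}\circ d$ and with $\alpha_{\mathfrak{U}_\mathfrak{g}}$ commuting with $\theta$ and $\delta$. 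Then I would form the Hom-associative HNN-extension $H=\langle\mathfrak{U}_\mathfrak{g},B,t,\delta,\theta\rangle$ of Section~\ref{sec:HNNextensioninvolHomassocalgs}, with structural homomorphism $\phi\colon\mathfrak{U}_\mathfrak{g}\to H$. Provided $\mathfrak{U}_\mathfrak{g}/B$ is a free left Hom-$B$-module, Theorem~\ref{EmbedingHomAsso} applies and $\phi$ is injective.

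Finally, I would compare the two extensions. Endow $H$ with its commutator Hom-Lie structure $(H,[\cdot,\cdot]_H,\alpha_H)$. The defining relation of $H$ from condition~\ref{cond1}, namely $t\ast_H\phi(b)-\phi(\theta(b))\ast_H t=\phi(\delta(b))$, specializes at $b=\phi_\mathfrak{g}(s)$ with $s\in\mathfrak{s}$ (where $\theta(\phi_\mathfrak{g}(s))=\phi_\mathfrak{g}(s)$) to $[t,\phi\phi_\mathfrak{g}(s)]_H=\phi\phi_\mathfrak{g}(d(s))$. Hence the assignment $t\mapsto t$, $g\mapsto\phi(\phi_\mathfrak{g}(g))$ respects the presentation of Definition~\ref{HNN-Hom-Lie}, so by the universal property of that presentation it determines a Hom-Lie morphism $\psi\colon\mathfrak{h}\to(H,[\cdot,\cdot]_H,\alpha_H)$. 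The composite $\mathfrak{g}\to\mathfrak{h}\xrightarrow{\psi}H$ equals $\phi\circ\phi_\mathfrak{g}$, a composite of two injections, so the canonical map $\mathfrak{g}\to\mathfrak{h}$ is injective, which is the claim. The main obstacle I anticipate is verifying the hypotheses of Theorem~\ref{EmbedingHomAsso}: one must check that $B$ is genuinely the enveloping subalgebra $\mathfrak{U}_\mathfrak{s}$ and that $\mathfrak{U}_\mathfrak{g}/B$ is free as a left Hom-$B$-module, along with the compatibility of $\delta$ with the involution. These should follow from the involutive Hom-PBW theorem of \cite{GuoZhZheUEPBWHLieA}, but the freeness is precisely the delicate point, exactly as in the classical Lie-algebra argument of \cite{LichtmanShirvani1997:HNNextLiealg}.
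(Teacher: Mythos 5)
Your proposal takes essentially the same route as the paper's own proof: pass to the universal enveloping Hom-associative algebras, use PBW to get injectivity of $\phi_\mathfrak{g}$ and the identification of the enveloping subalgebra $\mathfrak{U}_\mathfrak{s}$, extend $d$ to the enveloping level, form the Hom-associative HNN-extension, and invoke Theorem \ref{EmbedingHomAsso} together with the freeness of $\mathfrak{U}_\mathfrak{g}/\mathfrak{U}_\mathfrak{s}$ as a left Hom-$\mathfrak{U}_\mathfrak{s}$-module. The only difference is cosmetic and lies in the last step: the paper establishes $\mathfrak{U}_\mathfrak{h}\simeq M$ via the universal property and concludes from there, whereas you build a Hom-Lie morphism $\mathfrak{h}\to H$ directly from the presentation of Definition \ref{HNN-Hom-Lie} and factor the injection $\phi\circ\phi_\mathfrak{g}$ through it --- logically the same factorization argument, with the same delicate point (the freeness hypothesis) left to the PBW theorem of \cite{GuoZhZheUEPBWHLieA} in both versions.
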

\begin{proof}
Let $({\mathfrak{U}}_{\mathfrak{g}} ,  {\phi}_{g})$ and $({\mathfrak{U}}_{\mathfrak{s}} ,  {\phi}_{s})$ be the universal enveloping Hom-associative algebras corresponding to, respectively, the involutive Hom-Lie algebra $\mathfrak{g}$ and its subalgebra $\mathfrak{s}$, which are involutive with respect to  Lemma \ref{invouniversal}. Let $\mathfrak{h}=\langle \mathfrak{g} , t: d(s)=[t,s]_{\mathfrak{h}}, s\in \mathfrak{s} \rangle $ be the HNN-extension of involutive Hom-Lie algebra $(\mathfrak{g},[\cdot,\cdot]_\mathfrak{g},\beta_\mathfrak{g})$ as above. By extending $d$ to a $\beta_{{\mathfrak{U}}_{\mathfrak{g}}}$-derivation of ${\mathfrak{U}}_{\mathfrak{g}}$ defined on ${\mathfrak{U}}_{\mathfrak{s}}$ we form the HNN-extension of involutive Hom-associative algebra ${\mathfrak{U}}_{\mathfrak{g}}$ which is denoted by $M=\langle {\mathfrak{U}}_{\mathfrak{g}}, {\mathfrak{U}}_{\mathfrak{s}},t, \delta \rangle $. Let $(R,\ast_R, \alpha_R)$ be an arbitrary involutive Hom-associative algebra with a homomorphism of Hom-Lie algebras $(\mathfrak{h}, [\cdot,\cdot]_\mathfrak{h}, {\beta}_{\mathfrak{h}}) \to  (R, {[\cdot,\cdot]}_{R}, {\beta}_{R})$. The restriction to $\mathfrak{g}$ extends to a homomorphism ${\mathfrak{U}}_{\mathfrak{g}} \to R$, which extends to a homomorphism $M \to R$, so we have ${\mathfrak{U}}_{\mathfrak{h}} \simeq M$.  As $ {\mathfrak{U}}_{\mathfrak{g}}/{\mathfrak{U}}_{\mathfrak{s}} $ is a free left Hom-${\mathfrak{U}}_{\mathfrak{s}}$-module, Theorem \ref{EmbedingHomAsso} implies that ${\mathfrak{U}}_{\mathfrak{g}}$ is embedded into $M$, and so $\mathfrak{g}$ embeds into its HNN-extension.
\end{proof}

\section*{Acknowledgement}
Chia Zargeh was supported by postdoctoral scholarship CNPq, Conselho Nacional de Desenvolvimento Científico  e Tecnológico - Brasil (152453/2019-9).


\end{document}